\newtheorem{theorem}{Theorem}[section]
\newtheorem{definition}[theorem]{Definition}
\newtheorem{proposition}[theorem]{Proposition}
\newtheorem{remark}[theorem]{Remark}
\numberwithin{equation}{section}
\def\RR{{\mathbb{R}}}
\newcommand{\norm}[2]{{\left\|#1\right\|}_{#2}}
\newcommand{\fl}[2]{(-\Delta)^#1#2}
\newcommand{\cns}{C_{N,s}}
\title{The Poisson equation from non-local to local}
\author{Umberto Biccari\textsuperscript{1}}  
\address{\textsuperscript{1,2}\,DeustoTech, University of Deusto, 48007 Bilbao, Basque Country, Spain.}
\address{\textsuperscript{1,2}\,Facultad de Ingenier\'ia, Universidad de Deusto, Avenida de las Universidades 24, 48007 Bilbao, Basque Country, Spain, +34 944139003 - 3282.}
\email{umberto.biccari@deusto.es, u.biccari@gmail.com, victor.santamaria@deusto.es}
\thanks{The work of Umberto Biccari was partially supported by the Advanced Grant DYCON (Dynamic Control) of the European Research Council Executive Agency and by the MTM2014-52347 Grant of the MINECO (Spain).} 
\author{V\'ictor Hern\'andez-Santamar\'ia\textsuperscript{2}}
\thanks{The work of V\'ictor Hern\'andez-Santamar\'ia was partially supported by the
	Advanced Grant DYCON (Dynamic Control) of the European Research Council Executive Agency.} 
\keywords{Fractional Laplacian, elliptic equations, weak solutions}
\subjclass[2010]{35B30,35R11,35S05}
\begin{document}

\bibliographystyle{acm}

\begin{abstract}
We analyze the limit behavior as $s\to 1^-$ of the solution to the fractional Poisson equation $\fl{s}{u_s}=f_s$, $x\in\Omega$ with homogeneous Dirichlet boundary conditions $u_s\equiv 0$, $x\in\Omega^c$. 
We show that $\lim_{s\to 1^-} u_s =u$, with $-\Delta u =f$, $x\in\Omega$ and $u=0$, $x\in\partial\Omega$. Our results are complemented by a discussion on the rate of convergence and on extensions to the parabolic setting.
\end{abstract}

\maketitle

\section{Introduction and main result}\label{intro}

Let $0<s<1$ and let $\Omega\subset\RR^N$ be a bounded and regular domain. Let us consider the following elliptic problem
\begin{equation}\label{PE}
	\begin{cases}
	\fl{s}{u} = f, &x\in\Omega\tag{$\mathcal P_s$}
	\\
	u\equiv 0, & x\in\Omega^c.
	\end{cases}
\end{equation}

In \eqref{PE}, with $\fl{s}{}$ we indicate the fractional Laplace operator, defined for any function $u$ regular enough as the following singular integral
\begin{align}\label{frac_lapl}
	\fl{s}{u}(x):=\cns\,P.V.\int_{\RR^N} \frac{u(x)-u(y)}{|x-y|^{N+2s}}\,dy,
\end{align}
where $\cns$ is a normalization constant given by
\begin{align}\label{cns}
	\cns:= \frac{s2^{2s}\Gamma\left(\frac{N+2s}{2}\right)}{\pi^{N/2}\Gamma(1-s)},
\end{align}
$\Gamma$ being the usual Gamma function. Moreover, we have to mention that, for having a completely rigorous definition of the fractional Laplace operator, it is necessary to introduce also the class of functions $u$ for which computing $\fl{s}{u}$ makes sense. We postpone this discussion to the next section.

Models involving the fractional Laplacian or other types of non-local operators have been widely used in the description of several complex phenomena for which the classical local approach turns up to be inappropriate or limited. Among others, we mention applications in turbulence (\cite{bakunin2008turbulence}), elasticity (\cite{dipierro2015dislocation}), image processing (\cite{gilboa2008nonlocal}), laser beams design (\cite{longhi2015fractional}), anomalous transport and diffusion (\cite{meerschaert2012fractional}), porous media flow (\cite{vazquez2012nonlinear}). Also, it is well known that the fractional Laplacian is the generator of s-stable processes, and it is often used in stochastic models with applications, for instance, in mathematical finance (\cite{levendorskii2004pricing}).  

One of the main differences between these non-local models and  classical Partial Differential Equations is  that the fulfillment of a non-local equation  at a point involves the values of the function far away from that point. 

The Poisson problem \eqref{PE} is one of the most classical models involving the Fractional Laplacian, and it has been extensively studied in the past. Nowadays, there are many contributions concerning, but not limited to, existence and regularity of solutions, both local and global (\cite{biccari2017local,cozzi2017interior,grubb2015fractional,
leonori2015basic,ros2016boundary,ros2014dirichlet,servadei2014weak}), unique continuation properties (\cite{fall2014unique}), Pohozaev identities (\cite{ros2014pohozaev}), spectral analysis (\cite{frank2016refined}) and numerics (\cite{acosta2017fractional}).  

In the present paper, we are interested in analyzing the behavior of the solutions to \eqref{PE} under the limit $s\to 1^-$. Indeed, it is well-known (see, e.g., \cite{dihitchhiker,stinga2010extension}) that, at least for regular enough functions, it holds 
\begin{itemize}
	\item[$\bullet$] $\lim_{s\to 0^+}\fl{s}{u} = u$.
	
	\item[$\bullet$] $\lim_{s\to 1^-}\fl{s}{u} = -\Delta u$.
\end{itemize}

In view of that, it is interesting to investigate whether, when $s\to 1^-$, a solution $u_s$ to \eqref{PE} converges to a solution to the classical Poisson equation 
\begin{align}\label{poisson}
	\begin{cases}
	-\Delta u = f, &x\in\Omega\tag{$\mathcal P$}
	\\
	u = 0, & x\in\partial\Omega.
	\end{cases}
\end{align}

In our opinion, this is a very natural issue which, to the best of our knowledge, has never been fully addressed in the literature in the setting of weak solutions with minimal assumptions. As we will see, the answer to this question is positive. 

Before introducing our main result, let us recall that we have the following definition of weak solutions. 

\begin{definition}\label{weak_sol_def}
	Let $f\in H^{-s}(\Omega)$. A function $u\in H_0^s(\Omega)$ is said to be a weak solution of the Dirichlet problem \eqref{PE} if 
	\begin{align}\label{weak-sol}
	\frac{C_{N,s}}{2}\int_{\RR^N}\int_{\RR^N}\frac{(u(x)-u(y))(v(x)-v(y))}{|x-y|^{N+2s}}\;dxdy = \int_\Omega  fv\,dx
	\end{align}
	holds for every $v\in\mathcal{D}(\Omega)$.
\end{definition}

The main result of our work will be the following.

\begin{theorem}\label{limit_thm}
	Let $\mathcal{F}_s=\{f_s\}_{0<s<1}\subset H^{-s}(\Omega)$ be a sequence satisfying the following assumptions:
	\begin{itemize}
		\item[$\textbf{H1}$] $\norm{f_s}{H^{-s}(\Omega)}\leq C$, for all $0<s<1$ and uniformly with respect to $s$;
		
		\item[$\textbf{H2}$] $f_s\rightharpoonup f$ weakly in $H^{-1}(\Omega)$ as $s\to 1^-$.
	\end{itemize}		
	For all $f_s\in\mathcal{F}_s$, let $u_s\in H^s_0(\Omega)$ be the unique weak solution to the Dirichlet problem \eqref{PE}, in the sense of Definition \ref{weak_sol_def}. Then, as $s\to 1^-$, $u_s\to u$ strongly in $H^{1-\delta}_0(\Omega)$ for all $0<\delta\leq 1$. Moreover, $u\in H^1_0(\Omega)$ and verifies
\begin{align*}
	\int_{\Omega} \nabla u\cdot\nabla v\,dx = \int_{\Omega}fv\,dx, \;\;\; \forall v\in\mathcal{D}(\Omega),
\end{align*}
i.e. it is the unique weak solution to \eqref{poisson}.
\end{theorem}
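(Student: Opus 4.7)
The plan is a standard compactness-and-identify-the-limit argument: establish uniform energy bounds on $\{u_s\}$, extract a subsequential limit via compactness, identify the limit as the unique weak solution of \eqref{poisson}, and promote to full-sequence convergence by uniqueness. The subtlety, which shapes each step, is that one must work with the \emph{normalized} $s$-Gagliardo seminorm (carrying the $\cns/2$ factor), since this is the Bourgain--Brezis--Mironescu normalization under which the nonlocal energy limits onto the Dirichlet energy $\int_\Omega|\nabla u|^2\,dx$ as $s\to 1^-$.

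First I would derive the energy estimate by testing \eqref{weak-sol} against $v=u_s$:
\begin{equation*}
\frac{\cns}{2}\int_{\RR^N}\int_{\RR^N}\frac{|u_s(x)-u_s(y)|^2}{|x-y|^{N+2s}}\,dxdy = \langle f_s,u_s\rangle.
\end{equation*}
Combining with \textbf{H1} and an $s$-uniform fractional Poincar\'e inequality for the normalized seminorm, I would extract bounds on $\norm{u_s}{L^2(\Omega)}$ and on the displayed energy, both uniform in $s$ close to $1$. A Ponce/BBM-type compactness theorem then produces a subsequence (not relabeled) with $u_s\to u$ strongly in $L^2(\RR^N)$, and places $u\in H^1(\RR^N)$; the pointwise constraint $u_s\equiv 0$ in $\Omega^c$ passes to the limit, so $u\in H^1_0(\Omega)$. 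Interpolating between the $L^2$ strong convergence and the uniform normalized seminorm bound (together with an $s$-uniform comparison of Gagliardo seminorms at indices strictly less than $s$) upgrades this to strong convergence in $H^{1-\delta}_0(\Omega)$ for every $\delta\in(0,1]$.

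Next I would identify $u$ as the weak solution of \eqref{poisson}. For any $v\in\mathcal{D}(\Omega)$, the symmetry of the Gagliardo form and the fact that $u_s$ vanishes outside $\Omega$ allow the rewriting
\begin{equation*}
\int_{\Omega}u_s\,\fl{s}{v}\,dx = \int_{\Omega} f_s v\,dx.
\end{equation*}
On the left, $\fl{s}{v}\to -\Delta v$ uniformly (hence in $L^2(\Omega)$) for smooth compactly supported $v$, so combined with $u_s\to u$ in $L^2(\Omega)$ the integral converges to $\int_\Omega u(-\Delta v)\,dx=\int_\Omega\nabla u\cdot\nabla v\,dx$ after integrating by parts. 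On the right, $\int_\Omega f_s v\to\int_\Omega fv$ by \textbf{H2}, since $v\in H^1_0(\Omega)$. Thus $u$ satisfies the weak formulation of \eqref{poisson}; uniqueness of that solution forces convergence of the whole sequence, not merely a subsequence.

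I expect the principal difficulty to be the uniform-in-$s$ bookkeeping as $s\to 1^-$. The constant $\cns$ degenerates to zero, so the unnormalized seminorm $[u_s]_{H^s}$ blows up even though $\tfrac{\cns}{2}[u_s]_{H^s}^2$ remains controlled; any comparison of seminorms at different indices must be carried out through the normalized versions to yield genuinely $s$-uniform bounds. The BBM/Ponce framework supplies the correct scalings, but it must be tied to the duality between $H^s_0(\Omega)$ and $H^{-s}(\Omega)$ in order to exploit \textbf{H1}, and combined with interpolation in order to realize strong convergence in $H^{1-\delta}_0(\Omega)$ rather than merely in $L^2(\Omega)$.
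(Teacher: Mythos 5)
Your proposal is correct and follows essentially the same strategy as the paper: a uniform (normalized) energy bound obtained from the $H^{-s}$ duality estimate, the Bourgain--Brezis--Mironescu compactness theorem to extract $u_s\to u$ in $H^{1-\delta}_0(\Omega)$ with $u\in H^1_0(\Omega)$, and identification of the limit by rewriting the weak formulation as $\int_\Omega u_s\,\fl{s}{v}\,dx=\int_\Omega f_s v\,dx$ and letting $\fl{s}{v}\to-\Delta v$. The only material differences are presentational: you derive the energy bound by testing against $u_s$ rather than citing the Lax--Milgram estimate with its explicit constant $\sqrt{2/\cns}$, and you spell out the subsequence-plus-uniqueness step to recover full-sequence convergence (which the paper leaves implicit), while your limit passage via strong $L^2$ convergence of $u_s$ together with $\fl{s}{v}\to-\Delta v$ is in fact a cleaner justification than the paper's slightly informal dominated-convergence argument with the $s$-dependent majorant $C|u_s|$.
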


The proof of Theorem \ref{limit_thm} will be based on classical PDEs techniques. Moreover, the result will follow from the limit behavior as $s\to 1^-$ of the operator $\fl{s}{}$ (\cite{dihitchhiker,stinga2010extension}) and of the norm $\norm{\cdot}{H^s(\Omega)}$ (\cite{bourgain2001another}).

Furthermore, notice that Theorem \ref{limit_thm} requires the existence of a sequence $\mathcal{F}_s$ satisfying the assumptions $\textbf{H1}$ and $\textbf{H2}$. We point out that such sequence indeed exists, and that it is possible to construct it systematically. We will give a proof of this fact in Section \ref{prel}.

This paper will be organized as follows: Section \ref{prel} will be devoted to introduce some preliminary definitions and results that will be needed in our analysis. In Section \ref{weak_sec}, instead, we will present the proof of Theorem \ref{limit_thm}, concerning the limit behavior of the solutions to \eqref{PE}. Finally, in Section \ref{rem_sec}, we will present an additional result of convergence under weaker assumptions, a discussion on the rate of approximation and an extension to the the parabolic setting.

\section{Preliminaries}\label{prel}

In this section, we introduce some preliminary results that will be useful for the proof of our main theorem.

We start by giving a more rigorous definition of the fractional Laplace operator, as we have anticipated in Section \ref{intro}. Define 
\begin{align*}
	\mathcal L_s^{1}(\RR^N):=\left\{u:\RR^N\to\RR\;\mbox{ measurable},\; \int_{\RR^N}\frac{|u(x)|}{(1+|x|)^{N+2s}}\;dx<\infty\right\}.
\end{align*}
For $u\in \mathcal L_s^{1}(\RR^N)$ and $\varepsilon>0$ we set
\begin{align*}
	(-\Delta)_\varepsilon^s u(x):= C_{N,s}\int_{\{y\in\RR^N:\;|x-y|>\varepsilon\}}\frac{u(x)-u(y)}{|x-y|^{N+2s}}\;dy,\;\;x\in\RR^N.
\end{align*}

The fractional Laplace operator $\fl{s}{}$ is then defined by the following singular integral:
\begin{align}\label{fl_def}
	\fl{s}{u}(x)=\cns\,\mbox{P.V.}\int_{\RR^N}\frac{u(x)-u(y)}{|x-y|^{N+2s}}\;dy=\lim_{\varepsilon\downarrow 0}(-\Delta)_\varepsilon^s u(x),\;\;x\in\RR^N,
\end{align}
provided that the limit exists. 

We notice that if $0<s<1/2$ and $u$ is smooth, for example bounded and Lipschitz continuous on $\RR^N$, then the integral in \eqref{fl_def} is in fact not really singular near $x$ (see e.g. \cite[Remark 3.1]{dihitchhiker}). Moreover, $\mathcal L_s^{1}(\RR^N)$ is the right space for which $ v:=(-\Delta)_\varepsilon^s u$ exists for every $\varepsilon>0$, $v$ being also continuous at the continuity points of  $u$. 

It is by now well-known (see, e.g., \cite{dihitchhiker}) that the natural functional setting for problems involving the Fractional Laplacian is the one of the fractional Sobolev spaces. Since these spaces are not so familiar as the classical integral order ones, for the sake of completeness, we recall here their definition. 

Given $\Omega\subset\RR^N$ regular enough and $s\in(0,1)$, the fractional Sobolev space $H^s({\Omega})$ is defined as
\begin{align*}
	H^s(\Omega):= \left\{u\in L^2(\Omega)\,:\, \frac{|u(x)-u(y)|}{|x-y|^{\frac N2+s}}\in L^2(\Omega\times\Omega)\right\}.
\end{align*}

It is classical that this is a Hilbert space, endowed with the norm (derived from the scalar product)
\begin{align*}
	\norm{u}{H^s(\Omega)} := \left(\norm{u}{L^2(\Omega)}^2 + |u|_{H^s(\Omega)}^2\right)^{\frac 12},
\end{align*}
where the term 
\begin{align*}
	|u|_{H^s(\Omega)}:= \left(\int_\Omega\int_\Omega \frac{|u(x)-u(y)|^2}{|x-y|^{N+2s}}\,dxdy\right)^{\frac 12}
\end{align*}
is the so-called Gagliardo seminorm of $u$. We set 
\begin{align*}
	H_0^s(\Omega):= \overline{C_0^\infty(\Omega)}^{\,H^s(\Omega)}
\end{align*}
the closure of the continuous infinitely differentiable functions compactly supported in $\Omega$ with respect to the $H^s(\Omega)$-norm. The following facts are well-known.
\begin{itemize}
	\item[$\bullet$] For $0<s\leq\frac 12$, the identity $H_0^s(\Omega) = H^s(\Omega)$ holds. This is because, in this case, the $C_0^\infty(\Omega)$ functions are dense in $H^s(\Omega)$ (see, e.g., \cite[Theorem 11.1]{jllions1972non}).
	
	\item[$\bullet$] For $\frac 12<s<1$, we have $H_0^s(\Omega)=\left\{ u\in H^s(\RR^N)\,:\,u=0\textrm{ in } \Omega^c\right\}$ (\cite{fiscella2015density}).
\end{itemize}

Finally, in what follows we will indicate with $H^{-s}(\Omega)=\left(H^s(\Omega)\right)'$ the dual space of $H^s(\Omega)$ with respect tot the pivot space $L^2(\Omega)$.

A more exhaustive description of fractional Sobolev spaces and of their properties can be found in several classical references (see, e.g., \cite{adams2003sobolev,dihitchhiker,jllions1972non}).

Coming back to our problem, let us recall that the existence and uniqueness of weak solutions to \eqref{PE} is guaranteed by the following result (see, e.g., \cite[Proposition 1.2.23]{peradottolaplaciano}).

\begin{proposition}\label{prop-ex}
Let $\Omega\subset\RR^N$ be an arbitrary bounded open set and $0<s<1$. Then for every $f\in H^{-s}(\Omega)$, the Dirichlet problem \eqref{PE} has a unique weak solution $u\in H_0^s(\Omega)$. Moreover, there exists a constant $C>0$ such that
\begin{align}\label{est-sol}
	\norm{u}{H_0^s(\Omega)}\le C\norm{f}{H^{-s}(\Omega)}.
\end{align}
In addition, we can take $C=\sqrt{2/\cns}$. 
\end{proposition}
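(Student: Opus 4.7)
The plan is to apply the Lax--Milgram theorem on the Hilbert space $H^s_0(\Omega)$ to the symmetric bilinear form
\[
a(u,v) := \frac{C_{N,s}}{2}\int_{\RR^N}\int_{\RR^N}\frac{(u(x)-u(y))(v(x)-v(y))}{|x-y|^{N+2s}}\,dx\,dy
\]
and to the linear functional $L(v) := \langle f,v\rangle_{H^{-s},H^s_0}$. Elements of $H^s_0(\Omega)$ will be tacitly extended by zero to the whole of $\RR^N$, so that the kernel integral is finite: for $s>\tfrac{1}{2}$ this is exactly the characterization of $H^s_0(\Omega)$ recalled above, and for $s\le\tfrac{1}{2}$ the identity $H^s_0(\Omega)=H^s(\Omega)$ combined with classical extension results ensures that the zero-extension still lies in $H^s(\RR^N)$.

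The three hypotheses of Lax--Milgram will then be checked one by one. Continuity of $L$ is nothing more than the definition of the dual norm on $H^{-s}(\Omega)$. Continuity of $a$ will follow from Cauchy--Schwarz applied to the factorization $|x-y|^{-(N+2s)}=|x-y|^{-(N/2+s)}\cdot|x-y|^{-(N/2+s)}$, yielding a bound of the form $|a(u,v)|\le \frac{C_{N,s}}{2}|u|_{H^s(\RR^N)}|v|_{H^s(\RR^N)}$. The more substantive point is coercivity: since $a(u,u)=\frac{C_{N,s}}{2}|u|_{H^s(\RR^N)}^2$, I will invoke a fractional Poincar\'e inequality on the bounded domain $\Omega$ to control $\|u\|_{L^2(\Omega)}$ by a constant times $|u|_{H^s(\RR^N)}$; after this, the Gagliardo seminorm becomes an equivalent norm on $H^s_0(\Omega)$ and coercivity is immediate.

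Once Lax--Milgram applies, it produces a unique $u\in H^s_0(\Omega)$ with $a(u,v)=L(v)$ for every $v\in H^s_0(\Omega)$, and in particular for every $v\in\mathcal{D}(\Omega)\subset H^s_0(\Omega)$; this is precisely the weak formulation \eqref{weak-sol}. To derive the quantitative bound \eqref{est-sol}, I will test the identity $a(u,u)=L(u)$ with $v=u$, obtaining $\frac{C_{N,s}}{2}|u|_{H^s(\RR^N)}^2=\langle f,u\rangle\le \|f\|_{H^{-s}(\Omega)}\|u\|_{H^s_0(\Omega)}$; tracking the normalization convention for the $H^s_0$-norm (taken as the Gagliardo seminorm of the zero-extension) together with the corresponding dual norm on $H^{-s}(\Omega)$ then yields the explicit constant $C=\sqrt{2/C_{N,s}}$.

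The only genuine technical difficulty I anticipate is keeping the two regimes $0<s\le\tfrac{1}{2}$ and $\tfrac{1}{2}<s<1$ consistent, since both the definition of $H^s_0(\Omega)$ and the behaviour of zero-extensions change at $s=\tfrac{1}{2}$. Nevertheless, the dichotomy recalled in Section~\ref{prel}, together with the fact that the Gagliardo seminorm over $\RR^N$ of a zero-extended $H^s_0(\Omega)$-function is finite in both cases, lets the argument be carried out uniformly in $s$.
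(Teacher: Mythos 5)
The paper does not actually prove this proposition; it cites it from an external reference (Proposition 1.2.23 of the thesis referred to as ``peradottolaplaciano''), so there is no in-text argument to compare against. Your Lax--Milgram plan is the standard and correct route, and it is almost certainly what the cited source does: view $H^s_0(\Omega)$ functions as zero-extended elements of $H^s(\RR^N)$, check boundedness of $L$, boundedness of the bilinear form $a$ via Cauchy--Schwarz, and coercivity via the fractional Poincar\'e inequality; then test the resulting identity with $v=u$ to extract the constant. One small observation: once you decide to take the $H^s_0(\Omega)$-norm to be (a multiple of) the Gagliardo seminorm of the zero-extension, coercivity is immediate from the identity $a(u,u)=\tfrac{C_{N,s}}{2}|u|_{H^s(\RR^N)}^2$; the Poincar\'e inequality is only needed to certify that this seminorm is indeed an equivalent norm on $H^s_0(\Omega)$, i.e.\ that the $L^2$ piece is dominated. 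You should also remark that the paper's Definition~\ref{weak_sol_def} only tests against $v\in\mathcal{D}(\Omega)$, while Lax--Milgram gives the identity for all $v\in H^s_0(\Omega)$; these formulations agree because $\mathcal{D}(\Omega)$ is dense in $H^s_0(\Omega)$ by construction, and you need that density to pass from the definition's test class to all of $H^s_0(\Omega)$ in order to run the uniqueness argument.

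Two points deserve a bit more care than your sketch gives them. First, the $s=\tfrac12$ case is genuinely delicate: for $u\in H^{1/2}(\Omega)=H^{1/2}_0(\Omega)$ the zero-extension need not lie in $H^{1/2}(\RR^N)$ (this is the Lions--Magenes $H^{1/2}_{00}$ phenomenon), so the quantity $a(u,u)$ can be infinite on the closure the paper takes. Your argument works cleanly for $s\ne\tfrac12$, but you flag the regime change as merely notational when it is actually a well-known obstruction; a one-line disclaimer or a restriction to $s\ne\tfrac12$ would be more honest. Second, the explicit constant $C=\sqrt{2/C_{N,s}}$ is sensitive to the precise normalizations of $\norm{\cdot}{H^s_0(\Omega)}$ and of the dual norm $\norm{\cdot}{H^{-s}(\Omega)}$ (for instance, one obtains $\sqrt{2/C_{N,s}}$ if the primal norm carries the factor $\sqrt{C_{N,s}/2}$ while the dual norm is taken against the bare Gagliardo seminorm, and one obtains $2/C_{N,s}$ or even $1$ under other conventions). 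Saying that ``tracking the normalization'' produces the constant is not a derivation; in a finished write-up you should fix the two conventions explicitly and carry out the one-line computation, since the whole point of recording the constant is that it feeds into the $\sqrt{1-s}$ uniform bound used in the proof of Theorem~\ref{limit_thm}.
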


We remind that our main interest in the present work is the analysis of the behavior of the solutions of \eqref{PE} when $s\to 1^-$. The proof of Theorem \ref{limit_thm} is obtained employing classical techniques in functional analysis, as well as the following results.

\begin{proposition}[{\cite[Proposition 4.4]{dihitchhiker}}]\label{limit_prop}
For any $u\in C_0^\infty(\RR^N)$ the following statements hold:
\begin{itemize}
	\item[(i)] $\lim_{s\to 0^+}\fl{s}{u} = u$.
	\item[(ii)] $\lim_{s\to 1^-} \fl{s}{u} = -\Delta u$.
\end{itemize}
\end{proposition}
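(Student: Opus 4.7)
The cleanest route is through the Fourier transform. For $u\in C_0^\infty(\RR^N)\subset\mathcal{S}(\RR^N)$ one has the standard identity
\[
\widehat{\fl{s}{u}}(\xi)=|\xi|^{2s}\,\hat u(\xi),
\]
with the normalization constant $\cns$ in \eqref{cns} precisely calibrated so that this multiplier formula holds. I would either invoke this as a known fact (it is, after all, the very reason the constant $\cns$ takes the peculiar form in \eqref{cns}) or derive it by computing the Fourier transform of the tempered distribution $|x|^{-(N+2s)}$, whose prefactor involves the Gamma-function identity that produces $\cns$.

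Given the multiplier formula, both claims reduce to passing to the limit under the inverse Fourier integral
\[
\fl{s}{u}(x)=\mathcal{F}^{-1}\bigl(|\xi|^{2s}\hat u\bigr)(x).
\]
For each $\xi\ne 0$ we have $|\xi|^{2s}\to 1$ as $s\to 0^+$ and $|\xi|^{2s}\to|\xi|^2$ as $s\to 1^-$, so the integrands converge pointwise. To apply dominated convergence I would use the elementary inequality $|\xi|^{2s}\le 1+|\xi|^2$, valid for every $\xi\in\RR^N$ and every $s\in(0,1)$ (split according to $|\xi|\le 1$ or $|\xi|>1$). Since $\hat u$ is Schwartz, $(1+|\xi|^2)|\hat u(\xi)|$ is an $L^1$-majorant independent of $s$. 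Lebesgue's theorem then yields $\fl{s}{u}(x)\to\mathcal{F}^{-1}(\hat u)(x)=u(x)$ in case (i) and $\fl{s}{u}(x)\to\mathcal{F}^{-1}(|\xi|^2\hat u)(x)=-\Delta u(x)$ in case (ii), for every $x\in\RR^N$.

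The main obstacle, if one refuses to quote the Fourier symbol formula and instead works directly with the singular integral, is the bookkeeping of constants. In that approach one symmetrizes to
\[
\fl{s}{u}(x)=-\frac{\cns}{2}\int_{\RR^N}\frac{u(x+z)+u(x-z)-2u(x)}{|z|^{N+2s}}\,dz,
\]
Taylor-expands $u$ to second order in $z$ (the first-order term vanishing by symmetry), passes to spherical coordinates, and uses $\int_{S^{N-1}}\theta_i\theta_j\,d\sigma=\frac{\omega_{N-1}}{N}\delta_{ij}$. For (ii), the radial integral near the origin produces a factor $(2(1-s))^{-1}$; combined with the asymptotics $\Gamma(1-s)\sim(1-s)^{-1}$ as $s\to 1^-$, this cancels the $(1-s)$ hidden in $\cns$ and delivers $-\Delta u(x)$. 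For (i), the explicit factor $s$ in $\cns$ is what tames the contribution at infinity as $s\to 0^+$ and restores $u(x)$ in the limit. The computation is elementary but tedious, which is precisely why the Fourier route is preferable here.
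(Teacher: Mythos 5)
The paper does not give its own proof of this proposition; it is quoted directly from Di Nezza, Palatucci and Valdinoci \cite{dihitchhiker}, so there is no in-paper argument to compare against. Your Fourier proof is correct and complete: the symbol identity $\widehat{\fl{s}{u}}(\xi)=|\xi|^{2s}\hat u(\xi)$ for Schwartz $u$ (which is precisely what the normalization \eqref{cns} is designed to produce), the pointwise limits of $|\xi|^{2s}$ for $\xi\neq 0$ (the exceptional point $\xi=0$ being a null set), the $s$-uniform majorant $(1+|\xi|^2)|\hat u(\xi)|\in L^1(\RR^N)$, and the Dominated Convergence Theorem deliver both limits. Since the majorant is independent of $x$, you in fact obtain uniform convergence on $\RR^N$, slightly more than the pointwise statement. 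This is essentially the argument given in the cited reference, and your closing sketch of the alternative route through the symmetrized singular integral, second-order Taylor expansion, and the asymptotics of $\cns$ as $s\to 0^+$ and $s\to 1^-$ mirrors the constant analysis carried out in Section~4 of that same reference.
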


\begin{proposition}[{\cite[Corollary 7]{bourgain2001another}}]\label{brezis_prop}
For any $\varepsilon>0$, let $g_\varepsilon \in H^{1-\varepsilon}(\Omega)$. Assume that 
\begin{align*}
	\varepsilon\norm{g_\varepsilon}{H^{1-\varepsilon}(\Omega)}^2\leq C_0,
\end{align*}
where $C_0$ is a positive constant not depending on $\varepsilon$. Then, up to a subsequence, $\{g_\varepsilon\}_{\varepsilon>0}$ converges in $L^2(\Omega)$ (and, in fact, in $H^{1-\delta}(\Omega)$, for all $\delta > 0$) to some $g\in H^1(\Omega)$.
\end{proposition}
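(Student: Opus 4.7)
The plan is to reproduce the Bourgain--Brezis--Mironescu argument of Corollary~7 in \cite{bourgain2001another}. Its core is a translation estimate that exploits the scaling $\varepsilon\norm{g_\varepsilon}{H^{1-\varepsilon}(\Omega)}^2\leq C_0$ in order to counterbalance the divergence of the Gagliardo seminorm $|g_\varepsilon|_{H^{1-\varepsilon}(\Omega)}^2$ as $\varepsilon\to 0^+$. I would first rewrite the seminorm as an integral over translations, by Fubini,
\begin{align*}
|g_\varepsilon|_{H^{1-\varepsilon}(\Omega)}^2=\int_{\RR^N}\frac{dh}{|h|^{N+2(1-\varepsilon)}}\int_{\Omega\cap(\Omega-h)}|g_\varepsilon(x+h)-g_\varepsilon(x)|^2\,dx,
\end{align*}
and deduce, after averaging over $h$ in a small ball $B_\rho$, the equicontinuity bound
\begin{align*}
\frac{1}{|B_\rho|}\int_{B_\rho}\norm{g_\varepsilon(\cdot+h)-g_\varepsilon}{L^2(\Omega\cap(\Omega-h))}^2\,dh\leq C\rho^{2(1-\varepsilon)}|g_\varepsilon|_{H^{1-\varepsilon}(\Omega)}^2.
\end{align*}
Multiplying by $\varepsilon$ and invoking the hypothesis gives a right-hand side of the form $CC_0\rho^{2(1-\varepsilon)}$, which is the balance underlying the BBM scaling.

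Next, I would combine this averaged equicontinuity with an $L^2$-bound (extracted from the hypothesis after separating the $L^2$-part and the seminorm, and absorbed via a Poincaré-type control in $H^s_0(\Omega)$) and apply the Fréchet--Kolmogorov criterion to extract a subsequence $g_{\varepsilon_k}\to g$ strongly in $L^2(\Omega)$. To identify $g\in H^1(\Omega)$, I would appeal to the lower-semicontinuity counterpart of BBM's Theorem~2 in \cite{bourgain2001another}: whenever $g_\varepsilon\to g$ in $L^2(\Omega)$, one has
\begin{align*}
K_N\norm{\nabla g}{L^2(\Omega)}^2\leq\liminf_{\varepsilon\to 0^+}\varepsilon|g_\varepsilon|_{H^{1-\varepsilon}(\Omega)}^2\leq C_0,
\end{align*}
placing $g$ in $H^1(\Omega)$ with a quantitative bound.

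Finally, the upgrade from $L^2$ to $H^{1-\delta}$ convergence would be obtained by fractional Sobolev interpolation between $L^2$ and $H^{1-\varepsilon}$: for $0<\varepsilon<\delta\leq 1$, with $\theta=(\delta-\varepsilon)/(1-\varepsilon)\in(0,1)$,
\begin{align*}
\norm{g_\varepsilon-g}{H^{1-\delta}(\Omega)}\leq C\norm{g_\varepsilon-g}{L^2(\Omega)}^{\theta}\norm{g_\varepsilon-g}{H^{1-\varepsilon}(\Omega)}^{1-\theta}.
\end{align*}
The first factor tends to zero along the extracted subsequence by the previous step, while the second is controlled through the $\varepsilon$-scaled hypothesis together with the embedding $g\in H^1(\Omega)\hookrightarrow H^{1-\varepsilon}(\Omega)$. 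Careful tracking of the rates in $\varepsilon$ delivers $g_{\varepsilon_k}\to g$ in $H^{1-\delta}(\Omega)$ for every $\delta>0$.

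The main obstacle is making the Fréchet--Kolmogorov equicontinuity \emph{uniform} in $\varepsilon$ as $\varepsilon\to 0^+$. Both $|g_\varepsilon|_{H^{1-\varepsilon}(\Omega)}^2$ and the classical translation constant in the space $H^{1-\varepsilon}$ diverge individually, and only the combination $\varepsilon|g_\varepsilon|_{H^{1-\varepsilon}(\Omega)}^2$ stays controlled; showing that this balance is enough for strong $L^2$-compactness is the technical heart of BBM's original compactness result, and must be imported here rather than redone from scratch.
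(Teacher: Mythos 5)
The paper does not actually prove this proposition; it is quoted wholesale from \cite{bourgain2001another}, so the only meaningful benchmark is the Bourgain--Brezis--Mironescu argument itself, and measured against that your sketch has a gap at exactly the decisive point. Your averaged translation estimate, as written, does not close: dividing $\int_{B_\rho}\norm{g_\varepsilon(\cdot+h)-g_\varepsilon}{L^2}^2\,dh\leq \rho^{N+2(1-\varepsilon)}|g_\varepsilon|_{H^{1-\varepsilon}(\Omega)}^2$ by $|B_\rho|\sim\rho^N$ bounds the \emph{Lebesgue} average by $C\rho^{2(1-\varepsilon)}|g_\varepsilon|_{H^{1-\varepsilon}}^2\leq CC_0\,\rho^{2(1-\varepsilon)}/\varepsilon$, which blows up as $\varepsilon\to 0^+$; the hypothesis controls $\varepsilon$ times the average, not the average, so no uniform Fr\'echet--Kolmogorov equicontinuity follows from this line. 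The BBM fix is to average against the concentrating measure $\mu_\varepsilon(dh)=\varepsilon\,|h|^{-N+2\varepsilon}\,dh$, whose mass on $B_\rho$ is comparable to $\rho^{2\varepsilon}$, hence bounded below uniformly in $\varepsilon$; then $\int_{B_\rho}\norm{g_\varepsilon(\cdot+h)-g_\varepsilon}{L^2}^2\,\mu_\varepsilon(dh)\leq \rho^{2}\,\varepsilon\,|g_\varepsilon|_{H^{1-\varepsilon}}^2\leq C_0\rho^{2}$, which is the genuinely uniform bound feeding the mollification/compactness step. You do flag this as the ``technical heart'' to be imported from BBM, which is honest, but it means your write-up ultimately proves nothing beyond the citation the paper already makes. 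A further point: the stated hypothesis only yields $\norm{g_\varepsilon}{L^2(\Omega)}^2\leq C_0/\varepsilon$, whereas BBM assume the uniform $L^p$ bound as a separate hypothesis; your parenthetical Poincar\'e rescue silently assumes $g_\varepsilon\in H^{1-\varepsilon}_0(\Omega)$, which is not part of the statement (though it does hold in the paper's application of the proposition).

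The interpolation upgrade to $H^{1-\delta}(\Omega)$ also fails as written. Since $\norm{g_\varepsilon-g}{H^{1-\varepsilon}(\Omega)}\lesssim \varepsilon^{-1/2}$, your inequality requires $\norm{g_\varepsilon-g}{L^2(\Omega)}^{\theta}=o\bigl(\varepsilon^{(1-\theta)/2}\bigr)$, i.e.\ a quantitative rate of $L^2$-convergence, and compactness supplies no rate whatsoever. The workable route stays inside the BBM machinery: the mollification lemma gives the uniform pointwise translation bound $\norm{g_\varepsilon(\cdot+h)-g_\varepsilon}{L^2}^2\leq CC_0|h|^2$ for small $h$ (compare $g_\varepsilon$ with $g_\varepsilon\star\phi_{|h|}$, using that $\norm{g_\varepsilon-g_\varepsilon\star\phi_t}{L^2}\leq C t$ and $\norm{\nabla(g_\varepsilon\star\phi_t)}{L^2}\leq C$ uniformly in $\varepsilon$). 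Splitting the $H^{1-\delta}$ seminorm of $g_\varepsilon-g$ at $|h|=\rho$, the near part is then bounded by $C\rho^{2\delta}$ uniformly in $\varepsilon$, and the far part by $C\norm{g_\varepsilon-g}{L^2(\Omega)}^2\rho^{-2(1-\delta)}\to 0$ along the subsequence; this yields the $H^{1-\delta}$ convergence with no interpolation and no rate needed.
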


Finally, as we pointed out in Section \ref{intro}, our main result requires a sequence $\mathcal{F}_s$ satisfying the assumptions $\textbf{H1}$ and $\textbf{H2}$. The existence of such a sequence is guaranteed by the following.

\begin{proposition}\label{limit_f}
	For any $f\in H^{-1}(\Omega)$ there exists a sequence $\mathcal{F}_s=\{f_s\}_{0<s<1}\subset H^{-s}(\Omega)$ verifying the assumptions
	\begin{itemize}
		\item[$\textbf{H1}$] $\norm{f_s}{H^{-s}(\Omega)}\leq C$, for all $0<s<1$ and uniformly with respect to $s$.
		
		\item[$\widehat{\textbf{H2}}$] $f_s\to f$ strongly in $H^{-1}(\Omega)$ as $s\to 1^-$.
	\end{itemize}		
\end{proposition}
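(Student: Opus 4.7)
The plan is to construct $f_s$ explicitly by lifting $f$ through the Dirichlet Laplacian and then applying the fractional Laplacian. Let $u \in H_0^1(\Omega)$ be the unique weak solution of $-\Delta u = f$, and let $\widetilde u \in H^1(\RR^N)$ denote its extension by zero. Since $H^1(\RR^N) \subset H^s(\RR^N)$ for $0<s<1$ and $\widetilde u$ vanishes on $\Omega^c$, we have $\widetilde u \in H_0^s(\Omega)$, so I define $f_s$ by
\[
\langle f_s, v\rangle := \frac{C_{N,s}}{2}\int_{\RR^N}\int_{\RR^N}\frac{(\widetilde u(x)-\widetilde u(y))(v(x)-v(y))}{|x-y|^{N+2s}}\,dx\,dy \qquad \text{for } v \in \mathcal{D}(\Omega),
\]
which extends by density to a continuous functional $f_s \in H^{-s}(\Omega)$; note that with this choice $\widetilde u$ itself is exactly the weak solution of $\fl{s}{\widetilde u}=f_s$ in $\Omega$, $\widetilde u = 0$ in $\Omega^c$.

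To check \textbf{H1}, Cauchy--Schwarz applied to the defining double integral yields
\[
\norm{f_s}{H^{-s}(\Omega)} \le \left(\frac{C_{N,s}}{2}\,|\widetilde u|_{H^s(\RR^N)}^2\right)^{1/2},
\]
and by Proposition~\ref{brezis_prop} the right-hand side converges to $\norm{\nabla u}{L^2(\Omega)}$ as $s\to 1^-$, hence is bounded for $s$ close to $1$; for $s$ bounded away from $1$ both $C_{N,s}$ and $|\widetilde u|_{H^s(\RR^N)}$ remain bounded, so uniform boundedness in $s\in(0,1)$ follows.

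For the strong convergence $\widehat{\textbf{H2}}$, I combine density of $C_0^\infty(\Omega)$ in $H_0^1(\Omega)$ with Proposition~\ref{limit_prop}. Given $\varepsilon>0$ pick $\varphi \in C_0^\infty(\Omega)$ with $\norm{u-\varphi}{H_0^1(\Omega)}<\varepsilon$ and split
\[
f_s - f = (-\Delta)^s(\widetilde u - \varphi) + \bigl[(-\Delta)^s\varphi - (-\Delta)\varphi\bigr] + (-\Delta)(\varphi - u).
\]
In $H^{-1}(\Omega)$, the third summand has norm $\norm{\varphi - u}{H_0^1(\Omega)} < \varepsilon$; the first is controlled by applying the bound from the previous paragraph to $\widetilde u - \varphi$ and composing with the continuous embedding $H^{-s}(\Omega)\hookrightarrow H^{-1}(\Omega)$ (whose norm stays bounded as $s\to 1^-$ by duality from the BBM formula on $H_0^1(\Omega)$), yielding a bound of order $\varepsilon$; and the middle summand vanishes as $s\to 1^-$ because for the fixed $\varphi\in C_0^\infty(\Omega)$ the values $(-\Delta)^s\varphi(x)$ are uniformly bounded on $\Omega$ for $s$ near $1$, so Proposition~\ref{limit_prop} combined with dominated convergence gives $(-\Delta)^s\varphi\to -\Delta\varphi$ in $L^2(\Omega)\hookrightarrow H^{-1}(\Omega)$. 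Letting $s\to 1^-$ and then $\varepsilon\to 0$ concludes the argument. The main technical point is the uniform control of the embedding constant $H^{-s}(\Omega)\hookrightarrow H^{-1}(\Omega)$ as $s\to 1^-$, which by duality reduces to the BBM convergence $\tfrac{C_{N,s}}{2}|\widetilde v|_{H^s(\RR^N)}^2\to\norm{\nabla v}{L^2(\Omega)}^2$ for $v\in H_0^1(\Omega)$.
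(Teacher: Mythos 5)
Your construction is correct, but it is genuinely different from the paper's. The paper proceeds by representation and mollification: it writes $f=\textrm{div}(g)$ with $g\in L^2(\Omega)$, mollifies $g$ into $g_\varepsilon=g\star\rho_\varepsilon$, sets $f_\varepsilon:=\textrm{div}(g_\varepsilon)$, and finally takes $\varepsilon=1-s$ so that the smoothing scale is tied to the fractional order. You instead go \emph{through} the limit elliptic problem: solve $-\Delta u=f$, extend by zero, and define $f_s:=\fl{s}{\widetilde u}$. This has the pleasant feature that $u_s\equiv\widetilde u$ for every $s$, which makes the family $\{f_s\}$ particularly transparent with respect to the limit theorem; on the other hand, the paper's mollification produces a family that is ``agnostic'' of the equation and is the more standard way to trade one negative Sobolev scale for another. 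Both routes deliver the proposition.

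Two points where your write-up should be tightened. First, for the bound on $\norm{f_s}{H^{-s}(\Omega)}$ you invoke Proposition~\ref{brezis_prop}, but that statement is a compactness result, not the quantitative limit of the Gagliardo seminorm; what you actually need is the Bourgain--Brezis--Mironescu asymptotic $\frac{C_{N,s}}{2}\,|\widetilde u|_{H^s(\RR^N)}^2\to \norm{\nabla u}{L^2(\Omega)}^2$, which is a separate statement in \cite{bourgain2001another}. Second, and more substantively, to control the first term of your three-way split in $H^{-1}(\Omega)$ you assert that the embedding constant of $H^{-s}(\Omega)\hookrightarrow H^{-1}(\Omega)$ stays bounded ``by duality from the BBM formula.'' Pointwise convergence of the seminorms for each fixed $v$ does not by itself give a \emph{uniform} operator-norm bound. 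What is needed (and is true, but requires the short computation) is a bound uniform in both $s$ and $v$ of the form
\begin{align*}
\frac{C_{N,s}}{2}\,|\,\widetilde v\,|_{H^s(\RR^N)}^2 \le C\,\norm{v}{H^1_0(\Omega)}^2,\qquad s\in\bigl[\tfrac12,1\bigr),\ v\in H^1_0(\Omega),
\end{align*}
which follows by splitting the double integral at $|x-y|=1$: the near-diagonal part is estimated via $\norm{v(\cdot+h)-v}{L^2}\le |h|\,\norm{\nabla v}{L^2}$ and an integration in $h$ producing exactly the factor $\frac{1}{2-2s}$ that $C_{N,s}\sim (1-s)$ cancels, while the far part is bounded by $\norm{v}{L^2}^2$ uniformly for $s$ away from $0$. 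Once this uniform estimate is in place, your term-by-term argument is sound. (In fact, with it you can bound $\norm{\fl{s}{(\widetilde u-\varphi)}}{H^{-1}(\Omega)}$ directly by testing against $v\in\mathcal D(\Omega)$ and applying Cauchy--Schwarz, without passing through $H^{-s}(\Omega)$ at all, which avoids worrying about which normalization the $H^{-s}$ norm carries.)
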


\begin{proof}
Recall that any $f\in H^{-1}(\Omega)$ can be written as $f=\textrm{div}(g)$ with $g\in L^2(\Omega)$. Furthermore, let us introduce a standard mollifier $\rho_\varepsilon$ defined as 
\begin{align*}
	\rho_\varepsilon(x):= \begin{cases}
	C\varepsilon^{-N}\exp\left(\frac{\varepsilon^2}{|x|^2-\varepsilon^2}\right), & \textrm{if } |x|<\varepsilon
	\\
	0, & \textrm{if } |x|\geq\varepsilon
	\end{cases}
\end{align*}
and set $g_\varepsilon:=g\star\rho_\varepsilon$. It is classical that:
\begin{itemize}
		\item[(i)] $g_\varepsilon$ is well defined, since $g\in L^2(\Omega)$, hence it is locally integrable.
		\item[(ii)] $g_\varepsilon\in C_0^\infty(\Omega_\varepsilon)$, with $\Omega_\varepsilon:=\left\{x\in\Omega\,:\, \textrm{dist}(x,\partial\Omega)>\varepsilon\right\}.$
		\item[(iii)] $\partial_{x_i}g_\varepsilon$ is bounded uniformly with respect to $\varepsilon$ for all $i=1,\ldots,N$.
		\item[(iv)] $\lim_{\varepsilon\to 0^+} g_\varepsilon = g$, strongly in $L^2(\Omega)$. 
\end{itemize}
	
Thus we can take $f_\varepsilon:=\textrm{div}(g_\varepsilon)$ and, from Property (iii) above, we immediately have that $\norm{f_\varepsilon}{H^{-1+\varepsilon}(\Omega)}$ is bounded uniformly with respect to $\varepsilon$. In addition, using Properties (ii) and (iv), it is straightforward that, for all $i=1,\ldots,N$, $\partial_{x_i} g_\varepsilon = \rho_\varepsilon\star g_{x_i} \to g_{x_i}$ as $\varepsilon\to 0^+$. Hence, 
\begin{align*}
	\lim_{\varepsilon\to 0^+}f_\varepsilon = \lim_{\varepsilon\to 0^+}\textrm{div}(g_\varepsilon) = \textrm{div}(g) = f,
\end{align*}
where the convergence is strong in $H^{-1}(\Omega)$. Therefore, by choosing $\varepsilon=1-s$, following the above argument we can construct a sequence $\{f_s\}_{0<s<1}\subset H^{-s}(\Omega)$ verifying $\textbf{H1}$ and $\widehat{\textbf{H2}}$.
\end{proof}

\begin{remark}
	Notice that $\widehat{\textbf{H2}}$ is a property of strong convergence in $H^{-1}(\Omega)$ which, clearly, implies the weak convergence in the same functional setting (property $\textbf{H2}$). Therefore, Proposition \ref{limit_f} provides a sequence $\mathcal{F}_s$ which is within the hypotheses of Theorem \ref{limit_thm}.
\end{remark}

\section{The elliptic case: proof of Theorem \ref{limit_thm}}\label{weak_sec}
In this Section, we give the proof of Theorem \ref{limit_thm} employing the definition of weak solution that we gave in Section \ref{prel}. 

\begin{proof}[Proof of Theorem \ref{limit_thm}]
First of all, since we are interested in the behavior for $s\to 1^-$, until the end of the proof we will assume $s> 1/2$.  
Moreover, from $\textbf{H2}$ and the definition of weak convergence we get 
\begin{align}\label{limit-rhs}
	\lim_{s\to 1^-}\int_{\Omega} f_sv\,dx = \int_{\Omega} fv\,dx, \;\;\; \forall v\in\mathcal{D}(\Omega).
\end{align}

For all $0<s<1$, let $u_s\in H_0^s(\Omega)$ be the solution to \eqref{PE} corresponding to the right-hand side $f_s$. According to Proposition \ref{prop-ex}, for $s$ sufficiently close to one we have the estimate
\begin{align}\label{norm_est}
	\sqrt{1-s}\norm{u_s}{H^s(\Omega)}\leq\mathcal{C}(s,N)\norm{f_s}{H^{-s}(\Omega)},
\end{align}
with 
\begin{align*}
	\mathcal{C}(s,N) := \sqrt{\frac{2-2s}{\cns}}
\end{align*}

Moreover, for all $N$ fixed, the constant $\mathcal{C}(s,N)$ is decreasing as a function of $s$ (see Figure \ref{figure}). This of course implies
\begin{align*}
	\mathcal{C}(s,N) < \mathcal{C}\left(\frac 12,N\right) = \sqrt{\frac{\pi}{\Gamma\left(\frac{N+1}{2}\right)}}.
\end{align*}

\begin{figure}[h]
\centering
\pgfplotstableread{./cns_21-09-2017_11h22.org}{\datp}
\begin{tikzpicture}%[scale=1.0]
  \begin{axis}[xlabel=$s$, ylabel = $\mathcal{C}(s{,}N)$, ylabel style={rotate=0}, xmin=1/2,xmax=1,xtick={0.5,1}, ytick ={0,2}, legend cell align = {left}, legend style ={draw=none}, legend pos= outer north east]
                 
		\addplot [color=black, densely dotted, thick] table[x=0,y=1]{\datp};\addlegendentry{\;$N=2$}
		\addplot [color=black, densely dashed, thick] table[x=0,y=3]{\datp};\addlegendentry{\;$\phantom{N}=4$} 
		\addplot [color=black, dashed, semithick] table[x=0,y=5]{\datp};\addlegendentry{\;$\phantom{N}=6$}
		\addplot [color=black, dashdotted, semithick] table[x=0,y=7]{\datp};\addlegendentry{\;$\phantom{N}=8$}
		\addplot [color=black, semithick] table[x=0,y=9]{\datp};\addlegendentry{\;$\phantom{N}=10$}

	%\node at (axis cs:0.9,1.2) [anchor=center] {{\small $N=2$}};
\end{axis}
\end{tikzpicture}
\caption{Behavior of $\mathcal{C}(s,N)$ as a function of $s\in\left[\frac 12,1\right]$ for different fixed values of $N$.}\label{figure}
\end{figure}
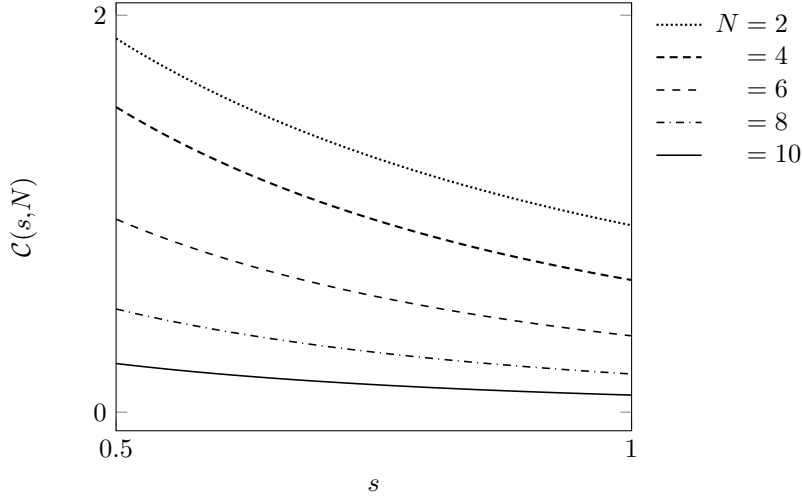

Therefore, from \eqref{norm_est} and the uniform boundedness of $\norm{f_s}{H^{-s}(\Omega)}$ we deduce that 
\begin{align*}
	\sqrt{1-s}\norm{u_s}{H^s(\Omega)}\leq C
\end{align*}
with $C$ depending only on $N$ and $\Omega$. This, thanks to Proposition \ref{brezis_prop}, allows us to conclude that $u_s\to u$ strongly in $H^{1-\delta}_0(\Omega)$ for any $0<\delta\leq 1$, and that $u\in H_0^1(\Omega)$.

Notice that, according to \cite[Section 6]{warma2015fractional}, for all $\phi\in H_0^s(\Omega)$ and $\psi\in\mathcal{D}(\Omega)$ we have the following identity
\begin{align*}
	\big\langle \fl{s}{\phi},\psi\big\rangle_{L^2(\Omega)} &=\frac{C_{N,s}}{2}\int_{\RR^N}\int_{\RR^N}\frac{(\phi(x)-\phi(y))(\psi(x)-\psi(y))}{|x-y|^{N+2s}}\;dxdy 
	\\ 
	&= \big\langle \phi,\fl{s}{\psi}\big\rangle_{L^2(\Omega)}.
\end{align*}

This can be applied to the variational formulation \eqref{weak-sol}, which can thus be rewritten as
\begin{align}\label{weak-def-s}
	\big\langle u_s,\fl{s}{v}\big\rangle_{L^2(\Omega)} = \int_\Omega f_sv\,dx.
\end{align}
Moreover, since $v\in\mathcal{D}(\Omega)$ we have
\begin{align*}
	\big|u_s\fl{s}{v}\big| \leq C|u_s|,
\end{align*}
where, clearly, $u_s\in L^2(\Omega)\hookrightarrow L^1(\Omega)$, being $\Omega$ a bounded domain. Hence we can use the Dominated Convergence Theorem and Proposition \ref{limit_prop} to conclude that
\begin{align*}
	\lim_{s\to 1^-} \big\langle u_s,\fl{s}{v}\big\rangle_{L^2(\Omega)} = \lim_{s\to 1^-} \int_\Omega u_s\fl{s}{v}\,dx = -\int_\Omega u\Delta v\,dx = \int_\Omega \nabla u\cdot\nabla v\,dx.
\end{align*}
This, together with \eqref{limit-rhs} and \eqref{weak-def-s} implies that $u$ verifies
\begin{align*}
	\int_\Omega \nabla u\cdot\nabla v\,dx = \int_{\Omega} fv\,dx, \;\;\; \forall v\in\mathcal{D}(\Omega),
\end{align*}
i.e. it is a weak solution to \eqref{poisson}.
\end{proof}

\begin{remark}
The result that we just proved is to some extent not surprising, due to the limit behavior of the fractional Laplacian as $s\to 1^-$. In fact, a hint that Theorem \ref{limit_thm} had to be true comes from the very classical example
\begin{equation*}
	\begin{cases}
		\fl{s}{u_s} =1, &x\in B(0,1)
		\\
		u_s\equiv 0, & x\in B(0,1)^c,
	\end{cases}
\end{equation*}
whose solution is given explicitly by
\begin{align*}
	u_s(x) = \frac{2^{-2s}\Gamma\left(\frac N2\right)}{\Gamma\left(\frac{N+2s}{2}\right)\Gamma(1+s)}\left(1-|x|^2\right)^s\chi_{B(0,1)}.
\end{align*}
Indeed, it can be readily checked that, for $x\in B(0,1)$, 
\begin{align*}
	\lim_{s\to 1^-} u_s(x) = \frac{1}{2N}\left(1-|x|^2\right):=u(x),
\end{align*}
which is the unique solution to the limit problem 
\begin{align*}
	\begin{cases}
		-\Delta u =1 , &x\in B(0,1)
		\\
		u= 0, & x\in \partial B(0,1).
	\end{cases}
\end{align*}

Of course, the above fact does not tell anything about the general case of problem \eqref{PE}. To the best of our knowledge, this is an issue that, although natural and probably expected, has not yet been fully addressed in the literature (at least, not in the setting of weak solutions with minimal assumptions) and our contribution helps to fill in this gap. 
\end{remark}

\section{Additional results an further comments}\label{rem_sec}

\subsection{Weakening the assumptions of Theorem \ref{limit_thm}}

Scope of this section is to show that a convergence result in the spirit of Theorem \ref{limit_thm} can be obtained under weaker assumption on the sequence $\mathcal F_s$ of the right-hand sides of \eqref{PE}. In particular, we are going to prove the following.

\begin{theorem}\label{limit_thm_weak}
Let $\mathcal F_s=\{f_s\}_{0<s<1}\subset H^{-1}(\Omega)$ be a sequence such that $f_s\rightharpoonup f$ weakly in $H^{-1}(\Omega)$. For all $f_s\in\mathcal F_s$, let $u_s$ be the corresponding solution to \eqref{PE}. Then, as $s\to 1^-$, $u_s\rightharpoonup u$ weakly in $L^2(\Omega)$, with $u$ solution to \eqref{poisson} in the transposition sense.
\end{theorem}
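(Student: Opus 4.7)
The plan is to mimic the strategy of Theorem \ref{limit_thm}, suitably adjusted for the weaker hypothesis $\mathcal{F}_s\subset H^{-1}(\Omega)$. Since this assumption no longer yields a uniform bound of $u_s$ in $H_0^s(\Omega)$ via \eqref{est-sol}, I would aim only at a uniform $L^2(\Omega)$ bound on $u_s$ (enough to extract a weakly convergent subsequence) and then pass to the limit in the transposition formulation tested against $\phi\in\mathcal{D}(\Omega)$.

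The first and crucial step is the uniform $L^2$ bound, which would be obtained by a duality argument. For any $\psi\in\mathcal{D}(\Omega)$, let $\phi_s\in H_0^s(\Omega)$ denote the weak solution to the adjoint problem $\fl{s}{\phi_s}=\psi$ in $\Omega$, $\phi_s\equiv 0$ on $\Omega^c$, given by Proposition \ref{prop-ex}. Using the self-adjointness of $\fl{s}{}$ recorded in the proof of Theorem \ref{limit_thm}, one has the identity
\begin{align*}
\int_\Omega u_s\psi\,dx = \int_\Omega u_s\fl{s}{\phi_s}\,dx = \langle f_s,\phi_s\rangle_{H^{-1},H_0^1}.
\end{align*}
For $s>1/2$ and smooth right-hand side $\psi$, the boundary regularity $\phi_s\sim \mathrm{dist}(\cdot,\partial\Omega)^s$ makes $\phi_s$ belong to $H_0^1(\Omega)$, and an $s$-uniform control of the form $\|\phi_s\|_{H_0^1(\Omega)}\le C\|\psi\|_{L^2(\Omega)}$ then gives
\begin{align*}
\Bigl|\int_\Omega u_s\psi\,dx\Bigr|\le \|f_s\|_{H^{-1}(\Omega)}\|\phi_s\|_{H_0^1(\Omega)}\le C\|\psi\|_{L^2(\Omega)},
\end{align*}
from which $\|u_s\|_{L^2(\Omega)}\le C$ uniformly in $s$ follows by density of $\mathcal{D}(\Omega)$ in $L^2(\Omega)$ and the uniform $H^{-1}$-boundedness of $\{f_s\}$.

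With the uniform bound in hand, reflexivity yields a subsequence $u_s\rightharpoonup u$ weakly in $L^2(\Omega)$. For $\phi\in\mathcal{D}(\Omega)$, I would decompose the transposition identity
\begin{align*}
\int_\Omega u_s\fl{s}{\phi}\,dx = \int_\Omega u_s(-\Delta\phi)\,dx + \int_\Omega u_s\bigl(\fl{s}{\phi}+\Delta\phi\bigr)\,dx = \langle f_s,\phi\rangle_{H^{-1},H_0^1}.
\end{align*}
The first integral converges to $\int_\Omega u(-\Delta\phi)\,dx$ by weak convergence of $u_s$ against $-\Delta\phi\in L^2(\Omega)$; the second vanishes in the limit because Proposition \ref{limit_prop}, together with the pointwise control of $\fl{s}{\phi}$ valid since $\phi\in C_0^\infty(\Omega)$, gives $\fl{s}{\phi}\to-\Delta\phi$ strongly in $L^2(\Omega)$ via the Dominated Convergence Theorem, while $\{u_s\}$ is $L^2$-bounded. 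The right-hand side converges to $\langle f,\phi\rangle$ by the weak convergence of $f_s$ in $H^{-1}(\Omega)$ and $\phi\in H_0^1(\Omega)$. Therefore $u$ satisfies
\begin{align*}
\int_\Omega u(-\Delta\phi)\,dx = \langle f,\phi\rangle_{H^{-1},H_0^1},\qquad \forall\phi\in\mathcal{D}(\Omega),
\end{align*}
which characterizes $u$ as the (unique) transposition solution of \eqref{poisson}, and this uniqueness upgrades the convergence to the full sequence. The most delicate point of the whole plan is the $s$-uniform bound $\|\phi_s\|_{H_0^1(\Omega)}\le C\|\psi\|_{L^2(\Omega)}$ used in the duality step: it relies on fine boundary regularity of fractional Dirichlet solutions and on a careful control of the dependence on $s$ of the implied constants as $s\to 1^-$; once this is secured, the rest of the argument is a routine adaptation of the compactness and passage-to-the-limit scheme developed for Theorem \ref{limit_thm}.
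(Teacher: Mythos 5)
Your proposal and the paper's own proof are both transposition arguments, and the defining identity is the same in both: in the paper one sets $u_s:=\Lambda^*f_s$ with $\Lambda\phi$ solving $\fl{s}{y}=\phi$, which is precisely the identity you obtain by duality against the adjoint problem. The one place where you phrase things slightly loosely is in treating $u_s$ as a ``weak solution'' and then invoking self-adjointness; the paper emphasizes that Lax--Milgram is unavailable here, so the transposition identity \emph{is} the definition of $u_s$ rather than a consequence of a weak formulation — but this is a presentational issue, not a mathematical one, since the resulting identity $\int_\Omega u_s\fl{s}{v}\,dx=\langle f_s,v\rangle$ for $v\in\mathcal{D}(\Omega)$ is indeed correct (one checks that $\Lambda\bigl(\fl{s}{v}|_\Omega\bigr)=v$ for compactly supported $v$).

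Where you genuinely diverge from the paper is in the passage to the limit, and your route is cleaner. The paper takes the limit in $\int_\Omega u_s\phi\,dx=\langle f_s,y_s\rangle$ for fixed $\phi\in L^2(\Omega)$, but $y_s$ depends on $s$, so the right-hand side is a pairing of two $s$-dependent families; justifying this requires knowing that $y_s\to y$ \emph{strongly} in $H^1_0(\Omega)$ (mere weak convergence would not suffice against the weakly convergent $f_s$), a point the paper passes over with a terse reference to dominated convergence. By instead testing against a fixed $\phi\in\mathcal{D}(\Omega)$ and splitting $\fl{s}{\phi}=-\Delta\phi+\bigl(\fl{s}{\phi}+\Delta\phi\bigr)$, you pair the weakly convergent $u_s$ only with $s$-independent or strongly convergent data, avoiding that subtlety altogether. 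Both your argument and the paper's rely on the same unproved ingredient: an $s$-uniform bound $\|\Lambda\|_{L^2\to H^1_0}\le C$ (equivalently, the constant in $\|u_s\|_{L^2}\le C\|f_s\|_{H^{-1}}$ must not blow up as $s\to 1^-$). You explicitly flag this as the delicate point; the paper states the estimate without commenting on uniformity. So your plan is sound, matches the paper's strategy in its first half, and is arguably more careful in the limiting step.
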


\begin{proof}
First of all, since we are interesting in analyzing the behavior of $u_s$ as $s\to 1^-$, until the end of this proof we will always assume $s>1/2$. Moreover, observe that, the right-hand side $f_s$ belongs to $H^{-1}(\Omega)$, which is strictly greater than $H^{-s}(\Omega)$. Therefore, we cannot apply Lax-Milgram Theorem. Instead, we shall define the solution to \eqref{PE} in a different way.

For all $\phi\in L^2(\Omega)$, let $y$ be solution of the elliptic problem 
\begin{align}\label{PE_transp}
	\begin{cases}
		\fl{s}{y}=\phi, & x\in\Omega
		\\
		y \equiv 0, & x\in\Omega^c.
	\end{cases}
\end{align}

Recall that, due to the regularity of $\phi$ and to the results contained in \cite{biccari2017local,cozzi2017interior}, for all $\varepsilon>0$ we have $y\in H^{2s-\varepsilon}_0(\Omega)\hookrightarrow H^1_0(\Omega)$, with continuous and compact embedding. 

Moreover, the map $\Lambda: \phi\mapsto y$ is linear and continuous from $L^2(\Omega)$ into $H^{2s-\varepsilon}_0(\Omega)$. Thus, $\Lambda$ is compact from $L^2(\Omega)$ into $H^1_0(\Omega)$ and its adjoint $\Lambda^*$ is a compact operator from $H^{-1}(\Omega)$ into $L^2(\Omega)$. In addition, 
\begin{align*}
	\langle f_s,y\rangle_{H^{-1}(\Omega),H^1_0(\Omega)} = \langle f_s,\Lambda \phi\rangle_{H^{-1}(\Omega),H^1_0(\Omega)} = (\Lambda^* f_s,\phi)_{L^2(\Omega)}.
\end{align*}

Therefore, $u_s:=\Lambda^*f_s\in L^2(\Omega)$ is a solution defined by transposition to \eqref{PE}, i.e. it satisfies
\begin{align}\label{transp_def_s}
	\int_\Omega u_s\phi\,dx = \langle f_s,y\rangle_{H^{-1}(\Omega),H^1_0(\Omega)}. 
\end{align}
Moreover, we have 
\begin{align}\label{us_norm}
	\norm{u_s}{L^2(\Omega)} \leq C\norm{f_s}{H^{-1}(\Omega)}.
\end{align}

In particular, $\{u_s\}_{0<s<1}$ is a bounded sequence in $L^2(\Omega)$, which implies that $u_s\rightharpoonup u$ weakly in $L^2(\Omega)$.

Notice that \eqref{transp_def_s} is obtained multiplying \eqref{PE} for $y$ and integrating over $\Omega$. Observe also that in this expression the functional spaces involved (namely $L^2(\Omega)$, $H_0^1(\Omega)$ and $H^{-1}(\Omega)$) do not depend on $s$. This, joint with \eqref{us_norm} and with the fact that $\phi\in L^2(\Omega)$, $f_s\in H^{-1}(\Omega)$ and $y\in H_0^1(\Omega)$, allows us to take the limit as $s\to 1^-$ in \eqref{transp_def_s}. Thanks to the Dominated Convergence Theorem, we then recover the expression
\begin{align}\label{transp_def}
	\int_\Omega u\phi\,dx = \langle f,y\rangle_{H^{-1}(\Omega),H^1_0(\Omega)},
\end{align}
i.e. $u$ is a solution by transposition to \eqref{poisson}. Moreover, since the $L^2(\Omega)$-regularity of $u_s$ cannot be improved, its convergence to a solution to \eqref{poisson} can be expected only in the weak sense.
\end{proof}

\subsection{Remarks on the convergence rate}

Our interest in the subject of this paper is motivated by previous results concerning the numerical approximation of the fractional Laplacian. In more detail, the issue that we addressed came from the observation that for the stiffness matrix $\mathcal A_h^s$ derived in \cite{biccari2017controllability} from the FE discretization of \eqref{frac_lapl} in dimension $N=1$  the following holds:
\begin{itemize}
	\item[(i)] $\lim_{s\to 0^+}\mathcal A_h^s = h\textrm{Tridiag}(1/6,2/3,1/6):=\mathcal I_h$, an approximation of the identity;
	
	\item[(ii)] $\lim_{s\to 1^-}\mathcal A_h^s = h^{-1}\textrm{Tridiag}(-1,2,-1):=\mathcal A_h$, the classical tridiagonal matrix for the FE approximation of the one-dimensional Laplacian.
\end{itemize}

The second property in particular implies that also the numerical solution $u_h^s$ associated to $\mathcal A_h^s$ converges to the one corresponding to $\mathcal A_h$. Therefore, investigating whether this still holds in the continuous case was a question that arose naturally. 

While we answered to this question in Theorem \ref{limit_thm}, there we did not specify under which rate this convergence occurs. In what follow, we present an informal discussion on this particular point. 

During the proof of Theorem \eqref{limit_thm}, we showed that the sequence $\{u_s\}_{0<s<1}$ of solutions to \eqref{PE} is bounded in $H_0^s(\Omega)$, with the following estimate 
\begin{align}\label{us_norm_est}
	\sqrt{1-s}\norm{u_s}{H^s(\Omega)}\leq C,
\end{align}
with $C$ a constant uniform with respect to $s$. This last inequality, in turn, was obtained as a consequence of Proposition \ref{prop-ex} and of the assumption $\mathbf{H1}$ on the sequence $\{f_s\}_{0<s<1}$ of the right-hand sides. 

Moreover, the factor $\sqrt{1-s}$ in \eqref{us_norm_est} already appears in \cite{bourgain2001another} to correct the well-known defect
of the seminorm $|\cdot|_{H^s(\Omega)}$ which, as $s\to 1^-$, does not converge to $|\cdot|_{H^1(\Omega)}$. 

In fact, if $\zeta$ is any smooth non-constant function, then for all $1< p<\infty$ we have $|\zeta|_{W^{s,p}(\Omega)}\to +\infty$ as $s\to 1^-$. This situation may be rectified by multiplying by $(1 -s)^{1/p}$ in front of $|\zeta|_{W^{s,p}(\Omega)}\to +\infty$. IN particular, we have 
\begin{align*}
	\lim_{s\to 1^-} (1-s)^{\frac 1p}|\zeta|_{W^{s,p}(\Omega)} = \left(\int_\Omega |\nabla\zeta|^p\,dx\right)^{\frac 1p}. 
\end{align*}

In view of these observations, we claim that the convergence that we obtained in Theorem \ref{limit_thm} satisfies the rate
\begin{align*}
	\lim_{s\to 1^-}\norm{u_s-u}{H^s(\Omega)} \sim \mathcal O(\sqrt{1-s}).
\end{align*}

Indeed, if this convergence were slower, then we would still have blow-up phenomena in the $H^s(\Omega)$-seminorm. On the other hand, if the convergence were faster, then for some $\alpha>1/2$
\begin{align*}
	\lim_{s\to 1^-}(1-s)^{\alpha}|\cdot|_{H^s(\Omega)} = \lim_{s\to 1^-}\underbrace{(1-s)^{\alpha-\frac 12}}_{\to 0}\underbrace{\sqrt{1-s}\,|\cdot|_{H^s(\Omega)}}_{\to |\cdot|_{H^1(\Omega)}}  = 0.
\end{align*}

Clearly, the discussion that we just presented is not a rigorous proof of our claim. Nevertheless, we believe that our statement is true, and a further confirmation is given by the following numerical simulations, where we compared the solution to \eqref{PE} and \eqref{poisson} for different values of $s$ and we computed the approximation error in the $H^s(\Omega)$-norm. As expected, we observe a convergence of $u_s$ to $u$, with a rate of $\sqrt{1-s}$.

\begin{figure}[!h]
	\pgfplotstableread{test1.org}{\datpu}
	
	\subfloat[Solutions to $\fl{s}{u_s}=\sin(\pi x^2)$ for different values of $s\in{[1/2,1]}$.]{
		\begin{tikzpicture}[scale=0.75]
		\begin{axis}[xmin=-0.98, xmax=0.98, ymax=0.7, xlabel=$x$, xtick={-0.98,0,0.98}, xticklabels={-1,0,1}, ytick=\empty, legend cell align = {left}, legend style ={draw=none}, legend pos= outer north east]
		
		\addplot [color=black, dotted, thick] table[x=0,y=3]{\datpu};\addlegendentry{\;$s=0.5$}
		\addplot [color=black, densely dotted, thick] table[x=0,y=4]{\datpu};\addlegendentry{\;$s=0.6$} 
		\addplot [color=black, densely dashed, semithick] table[x=0,y=5]{\datpu};\addlegendentry{\;$s=0.75$}
		\addplot [color=black, dashed, semithick] table[x=0,y=6]{\datpu};\addlegendentry{\;$s=0.9$}
		\addplot [color=black, dashdotted, semithick] table[x=0,y=7]{\datpu};\addlegendentry{\;$s=0.95$}
		\addplot [color=black, semithick] table[x=0,y=8]{\datpu};\addlegendentry{\;$s=1$}
		
		\end{axis}
		\end{tikzpicture}
	}
	\hspace{0.3cm}
	\subfloat[Decay of $\norm{u_s-u}{H^s(-1,1)}$ with respect to $s\in{[1/2,1]}$.]{
		\begin{tikzpicture}[scale=0.75]
		\begin{loglogaxis}[xmin=0.5, xmax=0.995, ymax=0.7, xlabel=$s$, xtick={0.5,0.995}, xticklabels={0.5,1}, legend cell align = {left}, legend style ={draw=none}, legend pos= outer north east]
		
		\addplot [color=black, semithick] table[x=1,y=2]{\datpu};\addlegendentry{\;error}
		\addplot [domain=0.5:1, dashed, samples=100, color=black, semithick]{0.05*sqrt(1-x)};\addlegendentry{\;$\sqrt{1-s}$}
		
		\end{loglogaxis}
		\end{tikzpicture}
	}
	\caption{Convergence of the solutions to $\fl{s}{u_s}=\sin(\pi x^2)$ with Dirichlet homogeneous boundary conditions as $s\to 1^-$, and its corresponding error in the $H^s(-1,1)$-norm.}
	\label{behavior1_fig}
\end{figure}
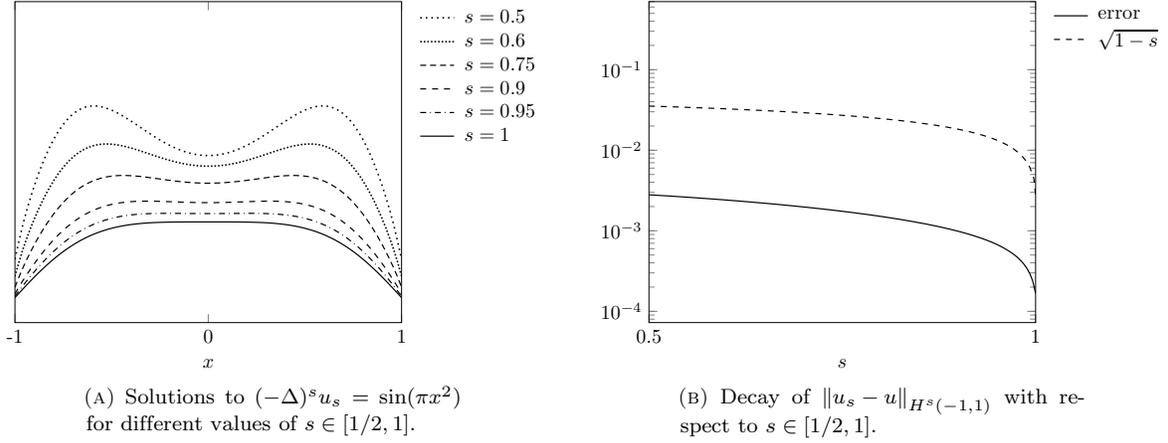

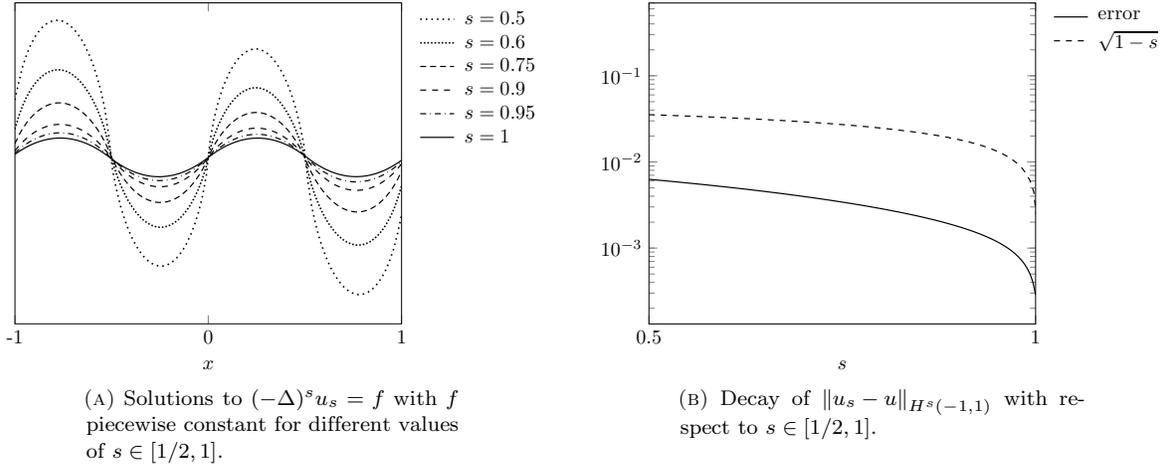
\begin{figure}[!h]
	\pgfplotstableread{test2.org}{\datpu}
	
	\subfloat[Solutions to $\fl{s}{u_s}=f$ with $f$ piecewise constant for different values of $s\in{[1/2,1]}$.]{
		\begin{tikzpicture}[scale=0.75]
		\begin{axis}[xmin=-0.98, xmax=0.98, ymax=0.25, xlabel=$x$, xtick={-0.98,0,0.98}, xticklabels={-1,0,1}, ytick=\empty, legend cell align = {left}, legend style ={draw=none}, legend pos= outer north east]
		
		\addplot [color=black, dotted, thick] table[x=0,y=3]{\datpu};\addlegendentry{\;$s=0.5$}
		\addplot [color=black, densely dotted, thick] table[x=0,y=4]{\datpu};\addlegendentry{\;$s=0.6$} 
		\addplot [color=black, densely dashed, semithick] table[x=0,y=5]{\datpu};\addlegendentry{\;$s=0.75$}
		\addplot [color=black, dashed, semithick] table[x=0,y=6]{\datpu};\addlegendentry{\;$s=0.9$}
		\addplot [color=black, dashdotted, semithick] table[x=0,y=7]{\datpu};\addlegendentry{\;$s=0.95$}
		\addplot [color=black, semithick] table[x=0,y=8]{\datpu};\addlegendentry{\;$s=1$}
		
		\end{axis}
		\end{tikzpicture}
	}
	\hspace{0.3cm}
	\subfloat[Decay of $\norm{u_s-u}{H^s(-1,1)}$ with respect to $s\in{[1/2,1]}$.]{
		\begin{tikzpicture}[scale=0.75]
		\begin{loglogaxis}[xmin=0.5, xmax=0.995, ymax=0.7, xlabel=$s$, xtick={0.5,0.995}, xticklabels={0.5,1}, legend cell align = {left}, legend style ={draw=none}, legend pos= outer north east]
		
		\addplot [color=black, semithick] table[x=1,y=2]{\datpu};\addlegendentry{\;error}
		\addplot [domain=0.5:1, dashed, samples=100, color=black, semithick]{0.05*sqrt(1-x)};\addlegendentry{\;$\sqrt{1-s}$}
		
		\end{loglogaxis}
		\end{tikzpicture}
	}
	\caption{Convergence of the solutions to $\fl{s}{u_s}=f$ with $f$ piecewise constant and Dirichlet homogeneous boundary conditions as $s\to 1^-$, and its corresponding error in the $H^s(-1,1)$-norm.}
	\label{behavior2_fig}
\end{figure}

\subsection{The parabolic case}

As it most often happens, the properties of the solutions to elliptic problems can be naturally transferred into the parabolic setting. In our case, this translates in the fact that the solution $\phi_s$ to the fractional heat equation

\begin{align}\label{FE}
	\begin{cases}
		\partial_t\phi_s + \fl{s}{\phi_s} = g_s, &(x,t)\in\Omega\times(0,T)\tag{$\mathcal H_s$}
		\\
		\phi_s\equiv 0, & (x,t)\in\Omega^c\times(0,T)
		\\
		\phi_s(x,0) = 0, & x\in\Omega,
	\end{cases}
\end{align}
converges as $s\to 1^-$ to the one to the local problem 
\begin{align}\label{HE}
	\begin{cases}
		\partial_t\phi -\Delta\phi = g, &(x,t)\in\Omega\times(0,T)\tag{$\mathcal H$}
		\\
		\phi= 0, & (x,t)\in\partial\Omega\times(0,T)
		\\
		\phi(x,0) = 0, & x\in\Omega.
	\end{cases}
\end{align}

First of all, let us recall that we have the following definition of weak solution for the parabolic problem \eqref{FE} (see, e.g., \cite{leonori2015basic}).
\begin{definition}\label{weak_sol_def_parabolic}
Let $g_s\in L^2(0,T;H^{-s}(\Omega))$. A function $\phi_s\in L^2(0,T;H_0^s(\Omega))\cap C([0,T];L^2(\Omega))$ with $\partial_t\phi_s\in L^2(0,T;H^{-s}(\Omega))$ is said to be a weak solution to the parabolic problem \eqref{FE} if for every $\psi\in\mathcal{D}(\Omega\times(0,T))$, it holds the equality
\begin{align}\label{weak-sol-par}
	\int_0^T \int_\Omega\partial_t\phi_s\psi\,dxdt &+ \frac{C_{N,s}}{2}\int_0^T\int_{\RR^N}\int_{\RR^N}\frac{(\phi_s(x)-\phi_s(y))(\psi(x)-\psi(y))}{|x-y|^{N+2s}}\;dxdydt \notag
	\\
	&= \int_0^T\int_\Omega g_s\psi\,dxdt.
\end{align}
\end{definition}

Moreover, thanks to \cite[Theorem 26]{leonori2015basic}, existence and uniqueness of solutions is guaranteed. Namely, we have

\begin{proposition}
Assume that $f_s\in L^2(0,T;H^{-s}(\Omega))$. Then problem \eqref{FE} has a unique finite energy solution, defined according to \eqref{weak_sol_def_parabolic}.
\end{proposition}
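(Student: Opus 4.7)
The plan is to carry out a standard Faedo–Galerkin construction adapted to the non-local setting, mirroring what one does for the classical heat equation but using the spectral basis of the Dirichlet fractional Laplacian. Let $\{e_k\}_{k\geq 1}\subset H_0^s(\Omega)$ be an orthonormal basis of $L^2(\Omega)$ consisting of eigenfunctions of $\fl{s}{}$ with homogeneous Dirichlet exterior condition, associated to eigenvalues $0<\lambda_1^s\le\lambda_2^s\le\cdots\to\infty$. For each $n\in\NN$ I look for an approximation
\begin{align*}
\phi_s^n(x,t)=\sum_{k=1}^n c_k^n(t)\,e_k(x)
\end{align*}
satisfying the finite-dimensional system
\begin{align*}
(c_k^n)'(t)+\lambda_k^s\,c_k^n(t)=\langle g_s(\cdot,t),e_k\rangle_{H^{-s},H^s_0},\qquad c_k^n(0)=0,\quad k=1,\dots,n.
\end{align*}
This is a linear ODE system with $L^2$-in-time right-hand side, so Carathéodory theory yields a unique absolutely continuous solution on $[0,T]$.

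Next I would derive the key energy estimate. Testing the Galerkin equation against $\phi_s^n$ and using the variational characterization \eqref{weak-sol} of $\fl{s}{}$ gives
\begin{align*}
\frac12\frac{d}{dt}\norm{\phi_s^n(t)}{L^2(\Omega)}^2+\frac{C_{N,s}}{2}|\phi_s^n(t)|_{H^s(\RR^N)}^2=\langle g_s(t),\phi_s^n(t)\rangle_{H^{-s},H^s_0}.
\end{align*}
Applying Young's inequality to the right-hand side, integrating on $(0,T)$ and using $\phi_s^n(0)=0$ yields the uniform bound
\begin{align*}
\norm{\phi_s^n}{L^\infty(0,T;L^2(\Omega))}^2+\norm{\phi_s^n}{L^2(0,T;H_0^s(\Omega))}^2\le C\norm{g_s}{L^2(0,T;H^{-s}(\Omega))}^2,
\end{align*}
and comparing the two sides of the Galerkin equation then shows that $\partial_t\phi_s^n$ is bounded in $L^2(0,T;H^{-s}(\Omega))$ as well.

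With these bounds in hand, weak and weak-$\star$ compactness give a subsequence converging to some $\phi_s\in L^2(0,T;H_0^s(\Omega))$ with $\partial_t\phi_s\in L^2(0,T;H^{-s}(\Omega))$, and the Aubin–Lions lemma (with $H_0^s(\Omega)\hookrightarrow\hookrightarrow L^2(\Omega)\hookrightarrow H^{-s}(\Omega)$) upgrades convergence to strong convergence in $L^2(0,T;L^2(\Omega))$ and, together with the bound on $\partial_t\phi_s$, yields $\phi_s\in C([0,T];L^2(\Omega))$ with $\phi_s(0)=0$. Passing to the limit in the Galerkin identity against any test function built from finitely many $e_k$, and then using density of such functions in $\mathcal D(\Omega\times(0,T))$, one recovers the weak formulation \eqref{weak-sol-par}. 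For uniqueness, the difference $w$ of two solutions with the same datum solves the homogeneous problem; testing against $w$ itself (which is legitimate since $w\in L^2(0,T;H_0^s(\Omega))$ with $\partial_t w\in L^2(0,T;H^{-s}(\Omega))$ gives the chain rule $\frac12\frac{d}{dt}\norm{w}{L^2}^2=\langle\partial_t w,w\rangle$) and using $w(0)=0$ forces $w\equiv 0$.

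The only delicate point, and the one I would write out carefully, is the justification that $\phi_s$ attains the zero initial datum in the $C([0,T];L^2(\Omega))$ sense and that testing against $\phi_s$ itself is admissible in the weak formulation, since $\phi_s$ is not a classical test function in $\mathcal D(\Omega\times(0,T))$. This is handled by the standard density/chain-rule argument for functions in $L^2(0,T;H_0^s(\Omega))$ with distributional time derivative in $L^2(0,T;H^{-s}(\Omega))$, which is the fractional analogue of the Lions lemma used in the classical parabolic theory; modulo checking that this abstract machinery applies to the pair $(H_0^s(\Omega),H^{-s}(\Omega))$ (it does, since they form a Gelfand triple with pivot $L^2(\Omega)$), everything else is a routine adaptation of the local heat equation argument.
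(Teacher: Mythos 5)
Your proof is correct, but the paper does not actually prove this proposition: it simply cites \cite[Theorem 26]{leonori2015basic} for existence and uniqueness, so there is no in-paper argument to compare against. Your Faedo--Galerkin construction is the standard route to such a statement and is essentially what one finds in Leonori--Peral--Primo--Soria. The chain is sound: the Dirichlet fractional Laplacian on a bounded domain has compact resolvent, so the spectral basis exists and the finite-dimensional system is well posed; testing against $\phi_s^n$ and using the variational identity together with the fractional Poincar\'e inequality (so that the Gagliardo seminorm controls the full $H_0^s$ norm) gives the a priori bound; comparison in the Galerkin equation bounds $\partial_t\phi_s^n$ in $L^2(0,T;H^{-s}(\Omega))$; and the pair $(H_0^s(\Omega),H^{-s}(\Omega))$ is a Gelfand triple with pivot $L^2(\Omega)$, so both the Aubin--Lions compactness and the Lions chain-rule lemma (used for $\phi_s\in C([0,T];L^2(\Omega))$, the attainment of the zero datum, and the uniqueness test) apply verbatim. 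Two small remarks: first, because the problem is linear, Aubin--Lions is a convenience rather than a necessity --- weak convergences suffice to pass to the limit in the weak formulation; second, when you integrate the energy identity you should make explicit that you absorb the $\frac{\varepsilon}{2}\|\phi_s^n\|_{H_0^s}^2$ from Young's inequality into the left-hand side using the equivalence $\frac{C_{N,s}}{2}|\cdot|^2_{H^s(\RR^N)}\simeq\|\cdot\|^2_{H_0^s(\Omega)}$ on $H_0^s(\Omega)$, since the energy term involves only the seminorm. Neither point affects correctness.
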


Then, adapting the methodology for the proof of Theorem \eqref{limit_thm}, the following result is immediate.

\begin{theorem}\label{limit_thm_parabolic}
	Let $\mathcal{G}_s:=\{g_s\}_{0<s<1}\subset L^2(0,T;H^{-s}(\Omega))$ be a sequence satisfying the following assumptions for all $0<t<T$:
	\begin{itemize}
		\item[$\textbf{K1}$] $\norm{g_s(t)}{H^{-s}(\Omega)}\leq C$, for all $0<s<1$ and uniformly with respect to $s$.
		
		\item[$\textbf{K2}$] $g_s(t)\rightharpoonup g(t)$ weakly in $H^{-1}(\Omega)$ as $s\to 1^-$.
	\end{itemize}		
For any $f_s\in\mathcal{G}_s$, let $\phi_s\in L^2(0,T;H^s_0(\Omega))$ be the unique weak solution to the corresponding parabolic problem \eqref{FE} in the sense of Definition \ref{weak_sol_def_parabolic}. Then, as $s\to 1^-$, $(\phi_s,\partial_t\phi_s)\to(\phi,\partial_t\phi)$ strongly in $L^2(0,T;H^{1-\delta}_0(\Omega))\times L^2(0,T;H^{-1}(\Omega))$ for any $0<\delta\leq 1$. Moreover, $\phi\in L^2(0,T;H^1_0(\Omega))\times L^2(0,T;H^{-1}(\Omega))$ and verifies
\begin{align*}
	\int_0^T \int_\Omega\partial_t\phi\psi\,dxdt + \int_0^T\int_\Omega \nabla\phi\cdot\nabla\psi\;dxdt = \int_0^T\int_\Omega g\psi\,dxdt, \;\;\; \forall\psi\in\mathcal{D}(\Omega\times(0,T)),
\end{align*}
i.e. it is the unique weak solution to \eqref{HE}.
\end{theorem}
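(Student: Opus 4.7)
The plan is to mirror the elliptic argument used for Theorem \ref{limit_thm}, carrying out each step on the full cylinder $\Omega\times(0,T)$ and relying on the same two analytic ingredients: Proposition \ref{limit_prop} (pointwise limit of $\fl{s}{\psi}$ on smooth $\psi$) and Proposition \ref{brezis_prop} (Bourgain--Brezis--Mironescu-type compactness). As in the elliptic case I would assume $s>1/2$ throughout. The first step is a parabolic energy estimate: testing the weak formulation \eqref{weak-sol-par} against $\psi=\phi_s$ (justified by a standard density/regularization argument) and using $\phi_s(0)=0$ gives
\begin{align*}
\frac{1}{2}\norm{\phi_s(T)}{L^2(\Omega)}^2 + \frac{C_{N,s}}{2}\int_0^T |\phi_s(t)|_{H^s(\RR^N)}^2\,dt = \int_0^T \langle g_s(t),\phi_s(t)\rangle_{H^{-s},H_0^s}\,dt.
\end{align*}
A Cauchy--Schwarz / Young argument on the right-hand side, the asymptotic behaviour $C_{N,s}\sim C(N)(1-s)$ as $s\to 1^-$ (already implicit in Proposition \ref{prop-ex} and in Figure \ref{figure}), and hypothesis $\textbf{K1}$ then yield the uniform estimate
\begin{align*}
\norm{\phi_s}{L^\infty(0,T;L^2(\Omega))}^2 + (1-s)\norm{\phi_s}{L^2(0,T;H_0^s(\Omega))}^2 \leq C,
\end{align*}
which is the parabolic counterpart of \eqref{norm_est}.

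Second, I would extract convergent subsequences and identify the limiting equation. A Fubini-type argument combined with Proposition \ref{brezis_prop} applied slicewise in $t$ produces, up to extraction, strong convergence $\phi_s\to\phi$ in $L^2(0,T;H^{1-\delta}_0(\Omega))$ for every $\delta\in(0,1]$, with limit $\phi\in L^2(0,T;H_0^1(\Omega))$. Testing against $\psi\in\mathcal{D}(\Omega\times(0,T))$, and using the self-adjointness identity already exploited in the proof of Theorem \ref{limit_thm}, the nonlocal bilinear term rewrites as $\int_0^T\int_\Omega \phi_s\,\fl{s}{\psi}\,dx\,dt$; since $\psi$ is smooth and compactly supported, Proposition \ref{limit_prop} together with the dominated convergence theorem sends this quantity to $-\int_0^T\int_\Omega \phi\,\Delta\psi\,dx\,dt = \int_0^T\int_\Omega \nabla\phi\cdot\nabla\psi\,dx\,dt$. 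The time-derivative term is handled by integrating by parts in $t$ (the endpoint contributions vanish since $\psi$ is compactly supported in $(0,T)$) and passing to the limit via the strong $L^2$-convergence of $\phi_s$. The right-hand side converges to $\int_0^T\langle g,\psi\rangle\,dt$ using $\textbf{K2}$ pointwise in $t$ together with the $t$-uniform bound $|\langle g_s(t),\psi(t)\rangle|\leq C\norm{\psi(t)}{H_0^1(\Omega)}$ coming from $\textbf{K1}$ and the smoothness of $\psi$, and dominated convergence. This identifies $\phi$ as the unique weak solution to \eqref{HE}.

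The main obstacle is the convergence $\partial_t\phi_s\to\partial_t\phi$ in $L^2(0,T;H^{-1}(\Omega))$ that the statement asserts \emph{strongly}. From the equation $\partial_t\phi_s = g_s-\fl{s}{\phi_s}$ I would first extract a uniform bound on $\partial_t\phi_s$ in a suitable negative-order space and pass to a weak limit, which must coincide with $\partial_t\phi$ by uniqueness of distributional derivatives. Upgrading from weak to strong convergence requires combining the already-established strong convergence of $\phi_s$ in $L^2(L^2)$ with an Aubin--Lions-type compactness argument and paying careful attention to the $s$-dependence of the operator norm of $\fl{s}{}\colon H_0^s(\Omega)\to H^{-s}(\Omega)$; this is the only genuinely new technical point beyond what was done in the elliptic analysis.
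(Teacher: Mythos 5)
Your plan follows the same route the paper takes: repeat the elliptic argument term by term on the cylinder, treating the time variable as a passive parameter, and invoke Propositions \ref{limit_prop} and \ref{brezis_prop} exactly as in the proof of Theorem \ref{limit_thm}. You supply considerably more detail than the paper does (the explicit energy identity, the explicit rewriting of the nonlocal term as $\int_0^T\int_\Omega\phi_s\fl{s}{\psi}$, the integration by parts in $t$), and that extra care exposes two points on which the paper itself is silent or imprecise.

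First, the application of Proposition \ref{brezis_prop} ``slicewise in $t$'' is not quite legitimate as stated: the energy bound you derive is $(1-s)\int_0^T|\phi_s(t)|_{H^s}^2\,dt\le C$, an $L^2$-in-time bound, whereas the slicewise use of Proposition \ref{brezis_prop} would require $(1-s)|\phi_s(t)|_{H^s}^2\le C$ uniformly in $t$. One needs either a Bochner-space version of the Bourgain--Brezis--Mironescu compactness or, more naturally here, the pair (uniform energy bound in $L^2(0,T;H^s)$, uniform bound on $\partial_t\phi_s$ in $L^2(0,T;H^{-s})$ read off from the equation) combined with Aubin--Lions to obtain strong convergence in $L^2(0,T;L^2)$ first, then upgrade. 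The paper waves this away with ``analogous to the elliptic case,'' so this is a genuine refinement on your part.

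Second, and more importantly, you are right to flag the claimed \emph{strong} convergence $\partial_t\phi_s\to\partial_t\phi$ in $L^2(0,T;H^{-1}(\Omega))$ as the one step that does not reduce to the elliptic argument. The paper justifies it by saying that ``the same argument previously developed for $\int_0^T\int_\Omega g_s\psi$ applies also to $\int_0^T\int_\Omega\partial_t\phi_s\psi$,'' but that earlier argument used hypothesis $\textbf{K2}$ --- an assumed weak convergence --- which has no analogue for $\partial_t\phi_s$. Moreover, weak convergence of $\int\partial_t\phi_s\psi$ for test functions $\psi$ would only yield convergence of $\partial_t\phi_s$ in the sense of distributions, not strongly in $L^2(0,T;H^{-1})$. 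In fact, since the equation gives $\partial_t\phi_s=g_s-\fl{s}{\phi_s}$ and $\textbf{K2}$ only asserts weak convergence of $g_s$, strong convergence of $\partial_t\phi_s$ would require $\fl{s}{\phi_s}$ to absorb the weak oscillations of $g_s$, which does not follow. Your proposed Aubin--Lions/operator-norm route is the right direction, but under hypothesis $\textbf{K2}$ alone one should only expect weak convergence of $\partial_t\phi_s$; the paper's assertion is over-strong and its proof of this point is incomplete. Your proposal is therefore faithful to the paper's method, more rigorous in its intermediate steps, and correct to identify the time-derivative convergence as the unfinished business.
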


\begin{proof}
First of all, notice that a sequence $\mathcal{G}_s$ verifying $\textbf{K1}$ and $\textbf{K2}$ exists. In fact, it can be constructed following the methodology of Proposition \ref{limit_f}, since both properties are independent of the time variable. Moreover, it is evident that we shall only analyze  the first term on the left-hand side of \eqref{weak-sol-par}. This is due to the following two facts:
\begin{itemize}
	\item The functional space in which the integration in time is carried out is fixed and does not depend on $s$. Therefore, the limit process does not affect the regularity in the time variable.
	
	\item For the remaining two terms in \eqref{weak-sol-par}, the limit as $s\to 1^-$ can be addressed in an analogous way as in the proof of Theorem \ref{limit_thm}.
\end{itemize}

On the other hand, since $\partial_t\phi_s\in L^2(0,T;H^{-s}(\Omega))$, the same argument previously developed for dealing with the term 
\begin{equation*}
	\int_0^T\int_\Omega g_s\psi\,dxdt
\end{equation*}
applies also to
\begin{equation*}
	\int_0^T\int_{\Omega} \partial_t\phi_s\psi\,dxdt.
\end{equation*}

In this way, we immediately conclude that, as $s\to 1^-$, $(\phi_s,\partial_t\phi_s)\to (\phi,\partial_t\phi)$ strongly in $L^2(0,T;H^{1-\delta}_0(\Omega))\times L^2(0,T;H^{-1}(\Omega))$ for all $0<\delta\leq 1$ and, in particular, that 
\begin{align*}
	\lim_{s\to 1^-}\int_0^T\int_{\Omega} \partial_t\phi_s\psi\,dxdt = \int_0^T\int_{\Omega} \partial_t\phi\psi\,dxdt.
\end{align*}
This, together with the above remarks, implies that the function $\phi$ satisfies
\begin{align*}
	\int_0^T \int_\Omega\partial_t\phi\psi\,dxdt + \int_0^T\int_\Omega \nabla\phi\cdot\nabla\psi\;dxdt = \int_0^T\int_\Omega g\psi\,dxdt, \;\;\; \forall\psi\in\mathcal{D}(\Omega\times(0,T)),
\end{align*}
i.e. it is the unique weak solution to \eqref{HE}.
\end{proof}

\section*{Acknowledgments}

The authors wish to acknowledge Enrique Zuazua (Universidad Auto\'onoma de Madrid, DeustoTech and Laboratoire Jacques-Louis Lions) for having suggested the topic of this work. Moreover, a special thank goes to Xavier Ros-Oton (Universit\"at Z\"urich) and Enrico Valdinoci (University of Melbourne) for interesting and clarifying discussions.
\bibliography{biblio}

\end{document}